\documentclass[reqno, 12pt]{article}

\pdfoutput=1

\usepackage{enumerate}
\usepackage{latexsym}
\usepackage[centertags]{amsmath}
\usepackage{amsfonts}
\usepackage{amssymb}
\usepackage{amsthm}
\usepackage{newlfont}
\usepackage{graphics}
\usepackage{color}
\usepackage{float}
\usepackage{diagbox}

\usepackage{booktabs}
\usepackage{algorithm}
\usepackage{algorithmicx}
\usepackage{algpseudocode}
\usepackage{url} 
\usepackage[pagebackref]{hyperref}
\usepackage{hyperref}
\usepackage{longtable}
\usepackage{rotating}
\usepackage{multirow}
\usepackage{extarrows}
\usepackage[sort,compress,numbers]{natbib}
\usepackage[utf8]{inputenc}
\usepackage{url}

\newtheorem{thm}{Theorem}[section]
\newtheorem{cor}[thm]{Corollary}
\newtheorem{lem}[thm]{Lemma}
\newtheorem{prop}[thm]{Proposition}

\theoremstyle{mydefinition}
\newtheorem{dfn}[thm]{Definition}
\theoremstyle{myremark}

\newtheorem{exa}[thm]{Example}

\allowdisplaybreaks[4]

\makeatletter
\let\c@algorithm\c@thm
\makeatother
\numberwithin{algorithm}{section}

\title{The Frobenius problem for a class of quotients of numerical semigroups}

\author{Feihu Liu
\\[2mm]
{\small School of Mathematical Sciences,}\\[-0.8ex]
{\small Capital Normal University, Beijing, 100048, P.R.~China}\\
{\small Email address: liufeihu7476@163.com}\\
}

\date{\today}

\begin{document}

\maketitle

\begin{abstract}
Given a numerical semigroup $S$ and a positive integer $p$, the quotient $\frac{S}{p}=\{x\in \mathbb{N} \mid px\in S\}$ also forms a numerical semigroup. In this paper, we first characterize the Ap\'ery set for a class of quotients of numerical semigroups.  
Under certain conditions, we then derive half-closed form formulas for their Frobenius number and genus.
Furthermore, for specific values of part parameters, we obtain explicit formulas for the Frobenius number of certain quotients of numerical semigroups.
\end{abstract}

\noindent
\begin{small}
\emph{2020 Mathematics subject classification}: Primary 11D07; Secondary 20M14.
\end{small}

\noindent
\begin{small}
\emph{Keywords}: Quotient of a numerical semigroup; Ap\'ery set; Frobenius number; Genus.
\end{small}

\section{Introduction}

Let $\mathbb{N}$ denote the set of all non-negative integers.
A subset $S$ of $\mathbb{N}$ is called a \emph{submonoid} if it contains $0$ and is closed under addition in $\mathbb{N}$.
When $\mathbb{N}\setminus S$ is finite, we call $S$ a \emph{numerical semigroup}.
Given a sequence $A = (a_1, a_2, \ldots, a_n)$ of positive integers, we denote by $\langle A \rangle$ the submonoid of $\mathbb{N}$ generated by $A$, which consists of all finite linear combinations of elements of $A$ with non-negative integer coefficients:
$$\langle A \rangle = \left\{ x_1a_1+x_2a_2+\cdots+x_na_n \mid x_i \in \mathbb{N}\quad \text{for}\quad 1 \leq i \leq n \right\}.$$
In \cite[Lemma 2.1]{J.C.Rosales}, it is shown that $\langle A\rangle$ forms a numerical semigroup if and only if $\gcd(A)=1$.
When this condition holds, we refer to $A$ as a \emph{system of generators} for the numerical semigroup $\langle A\rangle$.
Throughout this paper, we assume $\gcd(A)=1$.

This has led many researchers to investigate the following fundamental invariants:
1) The \emph{Frobenius number}, denoted by $F(\langle A\rangle)$, is the largest integer not contained in $\langle A\rangle$.
2) The \emph{genus} (or \emph{Sylvester number}), denoted by $g(\langle A\rangle)$, counts the number of positive integers not belonging to $\langle A\rangle$. In other words, 
$$F(\langle A\rangle)=\max \{\mathbb{N}\setminus \langle A\rangle \}\quad \text{and}\quad 
g(\langle A\rangle) = \# \{\mathbb{N}\setminus \langle A\rangle \}.$$
Given a sequence $A$ of positive integers, computing $F(\langle A\rangle)$ and $g(\langle A\rangle)$ is often referred to as the \emph{Frobenius Problem} or the \emph{Coin Exchange Problem}, see \cite{Ramrez Alfonsn}.
The determination of $F(\langle A\rangle)$ was shown by Ram\'irez Alfons\'in \cite{RamrezAlfonsn-Combin} to be NP-hard under Turing reduction.

Sylvester \cite{J.J.Sylvester1} established the following formulas 
\begin{align*}
F(\langle a_1,a_2\rangle) = a_1a_2 - a_1 - a_2 \quad \text{and} \quad g(\langle a_1,a_2\rangle) = \frac{1}{2}(a_1-1)(a_2-1).
\end{align*}
For the case when $n \geq 3$, Curtis \cite{F.Curtis} demonstrated that no closed-form expression of a certain type exists for $F(A)$. Nevertheless, numerous special cases have been investigated; we refer to \cite[Section 2; Section 3]{Ramrez Alfonsn} and \cite{Liu-Xin} for detailed results.

Let $p\in \mathbb{Z}^{+}$. Here $\mathbb{Z}^{+}$ denotes the set of all positive integers.
We define the \emph{quotient of $\langle A\rangle$ by $p$}, denoted by $\frac{\langle A\rangle}{p}$, to be the set consisting of all non-negative integers $x$ such that $px \in \langle A\rangle$; that is,
$$\frac{\langle A\rangle}{p} = \{ x \in \mathbb{N} \mid px \in \langle A\rangle \}.$$
It is easy to verify (see \cite[Section 5]{J.C.Rosales}) that $\frac{\langle A\rangle}{p}$ is a numerical semigroup, that $\langle A\rangle \subseteq\frac{\langle A\rangle}{p}$, and that $\frac{\langle A\rangle}{p}=\mathbb{N}$ if and only if $p\in\langle A\rangle$. The generators of the numerical semigroup $\frac{\langle A\rangle}{p}$ are discussed in \cite{Liu-BAMS}.

When $A = (a_1, a_2)$, a fundamental problem is to determine explicit formulas for the Frobenius number $F\left(\frac{\langle A\rangle}{p}\right)$ and the genus $g\left(\frac{\langle A\rangle}{p}\right)$. 
This case remains an open problem, as noted in \cite{MDelgado13}. Recent progress has been made in \cite{E.Cabanillas,F.Strazzanti}.
When $A$ is an almost arithmetic progression and $p$ is a positive divisor of $a_1$, the Frobenius number $F\left(\frac{\langle A\rangle}{p}\right)$ and the genus $g\left(\frac{\langle A\rangle}{p}\right)$ is studied in \cite{A.Adeniran,Liu-IJAC}.

In this paper, we consider the following class of quotients of numerical semigroups.
\begin{dfn}{\em \cite{LiuSemigroupF}}
Let $a,k\geq 2$, $d\in\mathbb{Z}\backslash \{0\}$, and $\gcd(a,d)=1$.
Define the sequences $B=(b_1,b_2,\ldots,b_k)$ and $H=(h_1,h_2,\ldots,h_k)$, where
\begin{align}
& b_1=1,\ b_{i+1}=s_ib_i+1\quad \text{and}\quad s_i\geq s_{i-1}\geq 1\quad \text{for}\quad 1\leq i\leq k-1,\label{BBcondition}
\\& h_i=ub_i+1, u\in \mathbb{Z}^{+}\quad \text{for}\quad 1\leq i\leq k.\label{HHcondition}
\end{align}
A family of numerical semigroups generated by $A$ is called a \emph{\texttt{GCNS} numerical semigroup} if
$$A=(a,Ha+dB)=(a,h_1a+db_1,h_2a+db_2,\ldots,h_ka+db_k),$$
with the additional requirement that $h_ia+db_i>1$ for $1\leq i\leq k$ when $d$ is a negative integer.
In the special case where $s_i=b\geq 2$ for all $i$ and $u=b-1$, that is, 
$$A=(a,Ha+dB)=\left(a,ba+d,b^2a+\frac{b^2-1}{b-1}d,\ldots,b^ka+\frac{b^k-1}{b-1}d\right),$$
the semigroup $\langle A\rangle$ is referred to as a \emph{\texttt{CNS} numerical semigroup}.
\end{dfn}

By the condition $\gcd(a, d) = 1$, it follows that $\gcd(a, h_1 a + d b_1) = \gcd(a, d b_1) = 1.$  
Moreover, we have $\gcd(A) = 1$. Hence, $\langle A \rangle$ is indeed a numerical semigroup.
In \cite{LiuSemigroupF}, the Frobenius problem for \texttt{GCNS} numerical semigroups is investigated.
This family of semigroups generalizes several important special cases, including Mersenne numerical semigroups \cite{Rosales2016}, repunit numerical semigroups \cite{Rosales.Repunit}, Thabit numerical semigroups \cite{Rosales2015}, Thabit numerical semigroups of the first kind base $b$ \cite{KyunghwanSong2020}, Cunningham numerical semigroups \cite{KyunghwanSong2020}, Proth numerical semigroups \cite{P.Srivastava}, as well as certain semigroups studied in \cite{GuZeTang,GuZe2020,KyunghwanSong}. For details, see \cite{LiuSemigroupF} for a comprehensive overview.

Let $\langle A\rangle$ be a \texttt{GCNS} numerical semigroup.
The main objective of this paper is to investigate the Frobenius problem for the quotient of $\langle A\rangle$ by $p$.
Throughout, we assume that $p$ is a positive divisor of $a$ and $p\neq a$.
This assumption excludes the trivial case $p = a$, since then $1\in \frac{\langle A \rangle}{a} = \mathbb{Z}^{+}$, which is immediate.

Our first main result characterizes the Ap\'ery set of the numerical semigroup \(\frac{\langle A\rangle}{p}\), which plays a key role in studying the Frobenius problem. (The Ap\'ery set will be formally introduced in Section \ref{Section-2}.)
This Ap\'ery set is closely related to the minimization problem defined by:
\begin{align*}
O_{B}^{H}(M)=\min\left\{uM+\sum_{i=1}^kx_i \bigm| \sum_{i=1}^k b_ix_i=M, \ x_i\in\mathbb{N}, 1\leq i\leq k\right\}.
\end{align*}
A detailed analysis of $O_B^H(M)$, based on the greedy algorithm, can be found in \cite{LiuSemigroupF}.

Inspired by the investigation of the Frobenius numbers for the quotients of numerical semigroups generated by almost arithmetic progressions in \cite{A.Adeniran,Liu-IJAC}, we study the Frobenius number and the genus of $\frac{\langle A \rangle}{p}$.
The second main result of this paper provides half-closed formulas for $F\left( \frac{\langle A \rangle}{p} \right)$ and $g\left( \frac{\langle A \rangle}{p} \right)$.
Given the values of $s_i$ for all $1 \leq i \leq k-1$, some closed-form expressions for $ F\left( \frac{\langle A \rangle}{p} \right)$ are derived.
When $p=1$, these formulas specialize to those for $F(\langle A \rangle)$ and $g(\langle A \rangle)$ of the classical \texttt{GCNS} numerical semigroup $\langle A \rangle$ established in \cite{LiuSemigroupF}.

All results in this paper were verified using the \emph{numericalsgps} package in GAP \cite{M.Delgado}.

The paper is organized as follows.
Section \ref{Section-2} presents a characterization of the Ap\'ery set for the numerical semigroup $\frac{\langle A\rangle}{p}$. 
In Section \ref{Section-3}, we derive formulas for the Frobenius number $F\left( \frac{\langle A \rangle}{p} \right)$ and the genus $g\left( \frac{\langle A \rangle}{p} \right)$.
Section \ref{Section-4} focuses on applications of the results from Section \ref{Section-3}. Given the values of $s_i$, we provide some closed-form formulas for $ F\left( \frac{\langle A \rangle}{p} \right)$.

\section{Ap\'ery Set}\label{Section-2}

We begin by recalling the important invariant known as the \emph{Ap\'ery set} of a numerical semigroup.
For convenience, we set $A := (a, B) = (a, b_1, b_2, \ldots, b_k)$ to facilitate the presentation of some conclusions.

Let $w \in \langle A\rangle \setminus \{0\}$.
The \emph{Ap\'ery set} of $w$ in $\langle A\rangle$ is defined as $Ape(A,w)=\{s\in \langle A\rangle \mid s-w\notin \langle A\rangle\}$. As shown in \cite{J.C.Rosales}, this set can be expressed as
$$Ape(A,w)=\{N_0,N_1,N_2,\ldots,N_{w-1}\},$$
where $N_r:=\min\{ a_0\mid a_0\equiv r\mod w, \ a_0\in \langle A\rangle\}$ for $0\leq r\leq w-1$. It is customary to take $w = a$.

The following results are due to Brauer and Shockley \cite{J. E. Shockley} and Selmer \cite{E. S. Selmer}, respectively.
\begin{lem}[\cite{J. E. Shockley}, \cite{E. S. Selmer}]\label{LiuXin001}
Let $A=(a, b_1, b_2, \ldots, b_k)$, and let $Ape(A,a)=\{N_0,N_1,N_2,\ldots,N_{a-1}\}$ be the Apéry set of $a$ in $\langle A \rangle$.
Then the Frobenius number and genus of $\langle A\rangle$ are given by
\begin{align*}
F(\langle A\rangle)=\max_{r\in \lbrace 0, 1, \ldots, a-1\rbrace}N_r -a\quad \text{and} \quad 
g(\langle A\rangle)=\frac{1}{a}\sum_{r=1}^{a-1}N_r-\frac{a-1}{2}.
\end{align*}
\end{lem}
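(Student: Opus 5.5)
The plan is to partition $\mathbb{N}$ by residues modulo $a$ and use the minimality built into each $N_r$. Since $\gcd(A)=1$, every residue class $r \bmod a$ contains elements of $\langle A\rangle$, so each $N_r$ is well defined, and $N_0=0$. The key observation is that for fixed $r$, an integer $x\equiv r \pmod a$ lies in $\langle A\rangle$ if and only if $x\ge N_r$. If $x\ge N_r$, then $x=N_r+ta$ with $t\in\mathbb{N}$, and this is in $\langle A\rangle$ because $a\in\langle A\rangle$ and the semigroup is closed under addition. Conversely, if $x\in\langle A\rangle$ with $x\equiv r$, then $x\ge N_r$ by the definition of $N_r$ as a minimum. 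I would also check the identification $Ape(A,a)=\{N_r\}$: an element $s\in\langle A\rangle$ with $s\equiv r$ satisfies $s-a\notin\langle A\rangle$ exactly when $s-a<N_r$, i.e.\ $s=N_r$.

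For the Frobenius number, the criterion shows that the gaps in class $r$ are exactly $N_r-a,\,N_r-2a,\ldots$ down to the least nonnegative member, and the largest of these is $N_r-a$. For $r=0$ this value is $-a<0$, so the class contributes no gap. Taking the maximum over all $r$ gives $F(\langle A\rangle)=\max_r N_r-a$. This is a genuine gap whenever $\langle A\rangle\neq\mathbb{N}$, which is the implicit setting; it is automatic when $a\ge 2$, since then some $N_r\ge a$.

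For the genus, the number of gaps in class $r$ (with $1\le r\le a-1$ and $0\le r<a$) is the number of $t\ge 0$ with $r+ta<N_r$. Writing $N_r=r+m_ra$, this count is exactly $m_r=(N_r-r)/a$. Summing over $r$ gives
\[
g(\langle A\rangle)=\sum_{r=1}^{a-1}\frac{N_r-r}{a}=\frac1a\sum_{r=1}^{a-1}N_r-\frac{1}{a}\cdot\frac{a(a-1)}{2},
\]
which is the stated formula.

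No step is a real obstacle. The only care needed is to treat $r=0$ correctly (it contributes neither gaps nor the maximum) and to note that $\gcd(A)=1$ guarantees every $N_r$ exists.
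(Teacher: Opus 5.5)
Your proof is correct and is the standard residue-class argument behind the cited results of Brauer--Shockley and Selmer. The paper gives no proof of its own and simply cites those sources, so there is no different route to compare.

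One small slip: $a\ge 2$ alone does not force some $N_r\ge a$. For example, $A=(2,1)$ gives $N_0=0$ and $N_1=1$. The correct statement is that some $N_r\ge a$ exactly when $\langle A\rangle\neq\mathbb{N}$, since a gap $x\equiv r$ forces $N_r\ge x+a$. In the degenerate case $\langle A\rangle=\mathbb{N}$ the formula returns $-1$, which matches the usual convention $F(\mathbb{N})=-1$.
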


When $w=a$ and $\gcd(a,d)=1$, the definition of $N_r$ implies
\begin{align*}
\{N_0, N_1, N_2,\ldots, N_{a-1}\}=\{N_{d\cdot 0}, N_{d\cdot 1}, N_{d\cdot 2},\ldots, N_{d\cdot (a-1)}\}.
\end{align*}

In \cite{LiuSemigroupF}, the computation of the Frobenius number and genus for \texttt{GCNS} numerical semigroups 
is closely related to the minimization problem defined by:
$$O_{B}^{H}(M):=\min\left\{\sum_{i=1}^kh_ix_i \mid \sum_{i=1}^k b_ix_i=M, \ x_i\in\mathbb{N}, 1\leq i\leq k\right\}.$$
This reduces to the $O_B(M)$ studied in \cite{Liu-Xin} when $H=(1,1,\dots,1)$.

Since the quotient of $\langle A\rangle$ by $p$ is a numerical semigroup and $\frac{a}{p} \in \frac{\langle A\rangle}{p}$, we let $N_{r,p}$ denote the element of the Ap\'ery set of $\frac{a}{p}$ in $\frac{\langle A\rangle}{p}$, where $0 \leq r \leq \frac{a}{p}-1$.

\begin{lem}\label{NDR-Quotient}
Let $A=(a, h_1a+db_1, \ldots, h_ka+db_k)$, where $a, k, h\in\mathbb{Z}^{+}$, $a\geq 2$, $d\in \mathbb{Z}\setminus \{0\}$, and $\gcd(A)=1$. If $d<0$, assume $h_ia+db_i>1$ for $1\leq i\leq k$.
Let $p$ be a positive divisor of $a$ (i.e., $p\mid a$ with $p\in \mathbb{Z}^{+}$). Then the Ap\'ery set of $\frac{a}{p}$ in the numerical semigroup $\frac{\langle A\rangle}{p}$ consists of the elements
\begin{align*}
N_{dr,p}=\min \left\{O_{B}^{H}(ma+rp) \cdot \frac{a}{p}+\left(\frac{ma}{p}+r\right)d \mid m\in \mathbb{N}\right\}\quad \text{for all}\quad 0\leq r\leq \frac{a}{p}-1.
\end{align*}
\end{lem}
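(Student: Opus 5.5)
The plan is to compute the Ap\'ery set of $\frac{a}{p}$ in $\frac{\langle A\rangle}{p}$ straight from the definition. We have $x\in\frac{\langle A\rangle}{p}$ if and only if $px\in\langle A\rangle$. So for a residue class modulo $\frac{a}{p}$, the minimal element $N$ of $\frac{\langle A\rangle}{p}$ in that class is the minimum of $x$ over all $x\equiv$ (class) $\pmod{a/p}$ with $px\in\langle A\rangle$. Multiplying by $p$, the condition $x\equiv c\pmod{a/p}$ becomes $px\equiv pc\pmod a$. Hence $pN_{c,p}$ is the smallest element of $\langle A\rangle$ congruent to $pc$ modulo $a$ and divisible by $p$. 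Since $p\mid a$, divisibility by $p$ is automatic once $px\equiv pc \pmod a$. So $pN_{c,p}=N_{pc}$, the Ap\'ery element of $a$ in $\langle A\rangle$ in residue $pc$. This observation is the first step.

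Next I describe the elements of $\langle A\rangle$ in a given residue class mod $a$. A general element has the form
\[
y=x_0a+\sum_{i=1}^k x_i(h_ia+db_i)=\Bigl(x_0+\sum_i h_ix_i\Bigr)a+d\sum_i b_ix_i .
\]
Put $M=\sum_i b_ix_i$. Then $y\equiv dM\pmod a$. The least such $y$ for fixed $M$ is obtained with $x_0=0$ and the $x_i$ chosen to minimise $\sum h_ix_i$, which gives $O_B^H(M)a+dM$. Using $\gcd(a,d)=1$, the classes $d\cdot s$ with $s$ ranging mod $a$ cover all residues. For the class $ds$ with $0\le s<a$ we obtain
\[
N_{ds}=\min_{m\in\mathbb N}\bigl\{O_B^H(ma+s)a+(ma+s)d\bigr\},
\]
because $M\equiv s\pmod a$ means $M=ma+s$. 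One subtlety: when $d<0$, $M$ can range only over nonnegative integers, so $M=ma+s$ with $m\ge0$ is exactly the right parametrisation. For $d>0$ the minimum is attained at $m=0$, but the formula remains valid.

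Now combine the two steps. Take the class of $\frac{\langle A\rangle}{p}$ indexed by $dr$, with $0\le r<\frac{a}{p}$. We need the residue $p\cdot dr=d(rp)$ mod $a$, which is $ds$ with $s=rp<a$. Therefore
\[
N_{dr,p}=\frac1p N_{d(rp)}=\min_m\Bigl\{O_B^H(ma+rp)\frac ap+\Bigl(\frac{ma}{p}+r\Bigr)d\Bigr\}.
\]
Finally, $\gcd(d,a/p)=1$, so the residues $dr$, $0\le r<\frac ap$, run over all classes mod $\frac ap$. Hence these elements form the whole Ap\'ery set.

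The main obstacle is justifying the identity $pN_{c,p}=N_{pc}$ carefully, including that the minimum over the residue class is achieved with nonnegative $x$. I also need to handle the $d<0$ case, where the minimum over $m$ is genuinely needed and the infimum is finite. Finiteness follows because $O_B^H(M)\ge uM+\min$ grows like $(u\cdot a+d)M/a$ in $a$-units, which is positive since $h_ia+db_i>1$.
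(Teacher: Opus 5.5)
Your argument is correct and is essentially the paper's: both write an element of $\langle A\rangle$ as $(x_0+\sum_i h_ix_i)a+d\sum_i b_ix_i$, discard $x_0$, cancel $d$ via $\gcd(a,d)=1$ to get $\sum_i b_ix_i=ma+rp$ with $m\in\mathbb{N}$, and minimise $\sum_i h_ix_i$ for fixed $M$. The only difference is that you isolate the multiplication by $p$ as the identity $pN_{c,p}=N_{pc}$, whereas the paper does it inline.

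Your closing remarks are unnecessary. The minimum over $m$ exists because every candidate is $\frac1p$ times an element of $\langle A\rangle$, hence a nonnegative integer. Your aside that the minimum sits at $m=0$ when $d>0$ fails for general $H$: for example, $B=(1,5)$, $H=(10,1)$, $a=2$, $d=p=r=1$ gives the minimum $7$ at $m=2$. Nothing in your proof depends on it.
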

\begin{proof}
By the definition of the Ap\'ery set, for any $0\leq r\leq \frac{a}{p}-1$, we have
\begin{align*}
N_{dr,p}&=\min\left\{ a_0\mid a_0\equiv dr\mod \frac{a}{p};\ a_0\in \frac{\langle A\rangle}{p}\right\}
=\min\left\{ a_0\mid a_0\equiv dr \mod \frac{a}{p};\ pa_0\in \langle A\rangle\right\}
\end{align*}
Since $p a_0 \in \langle A\rangle$, we can write
$$pa_0=a x_0+\sum_{i=1}^k(h_ia+db_i)x_i.$$
By minimality of $N_{dr,p}$, we must have $x_0 = 0$, and thus
\begin{align*}
N_{dr,p}&=\min\left\{\sum_{i=1}^k\frac{(h_ia+db_i)}{p}x_i\mid \sum_{i=1}^k\frac{(h_ia+db_i)}{p}x_i\equiv dr\mod \frac{a}{p}, \ x_i\in\mathbb{N}, 1\leq i\leq k \right\}
\\&=\min\left\{\left(\sum_{i=1}^k h_ix_i\right)\frac{a}{p}+\sum_{i=1}^k\frac{x_ib_id}{p} \mid \sum_{i=1}^k(h_ia+db_i)x_i\equiv drp \mod a, \ x_i\in\mathbb{N}, 1\leq i\leq k\right\}
\\&=\min\left\{\left(\sum_{i=1}^k h_ix_i\right)\frac{a}{p}+\sum_{i=1}^k\frac{x_ib_id}{p} \mid \sum_{i=1}^kdb_ix_i\equiv drp \mod a, \ x_i\in\mathbb{N}, 1\leq i\leq k\right\}
\end{align*}
Since $\gcd(A) = 1$, it follows that $\gcd(a, d) = 1$, and we deduce
\begin{align*}
N_{dr,p}&=\min\left\{\left(\sum_{i=1}^k h_ix_i\right)\frac{a}{p}+\sum_{i=1}^k\frac{x_ib_id}{p} \mid \sum_{i=1}^kb_ix_i\equiv rp \mod a, \ x_i\in\mathbb{N}, 1\leq i\leq k\right\}
\\&=\min\left\{\left(\sum_{i=1}^k h_ix_i\right)\frac{a}{p}+\left(\frac{ma}{p}+r\right)d \mid \sum_{i=1}^kb_ix_i=ma+rp, \ m,x_i\in\mathbb{N}, 1\leq i\leq k\right\}.
\end{align*}
For fixed $m$ (and hence fixed $M=ma+rp$), the expression $\sum_{i=1}^k h_i x_i$ is minimized by $O_B^{H}(ma +rp)$. This completes the proof.
\end{proof}

By Lemma \ref{NDR-Quotient}, we can define an intermediate function with respect to $m\in \mathbb{N}$ as follows:
$$N_{dr,p}(m):=O_{B}^{H}(ma+rp) \cdot \frac{a}{p}+\left(\frac{ma}{p}+r\right)d.$$
In our setting, it is not hard to show that $N_{dr,p}(m)$ is increasing with respect to $m$, which implies $N_{dr,p}=N_{dr,p}(0)$. This property facilitates further obtaining the formulas for the Frobenius number $F\left(\frac{\langle A\rangle}{p}\right)$ and the genus $g\left(\frac{\langle A\rangle}{p}\right)$.

Before proceeding, we introduce the following definition.  
Given a positive integer sequence  $B = (b_1, b_2, \ldots, b_k)$ with $1 = b_1 < b_2 < \cdots < b_k$ and $M \in \mathbb{N}$, define
\begin{align*}
opt_B(M)=O_B(M)=\min\left\{\sum_{i=1}^kx_i \mid \sum_{i=1}^kb_ix_i=M, \quad M,x_i\in \mathbb{N}, 1\leq i\leq k\right\}.
\end{align*}
This problem is known as the \emph{change-making problem} \cite{AnnAdamaszek}.

\begin{prop}{\em \cite[Lemma 2.6 and Proposition 2.7]{LiuSemigroupF}}\label{greedproper}
Let $B=\left(b_1,b_2,\ldots,b_k\right)$, where $b_1=1$, $b_{i+1}=s_ib_i+1$, and $s_i\geq s_{i-1}$ for $1\leq i\leq k-1$.
Then for any $M\in \mathbb{Z}^{+}$, the value $opt_B(M)=x_1+\dots+ x_k$ is uniquely determined by the following properties.
\begin{enumerate}
  \item[(1)] $x_k=\left\lfloor \frac{M}{b_k}\right\rfloor$.

  \item[(2)] $x_i\in \{0,1,\ldots,s_i\}$ for every $1\leq i\leq k-1$.

  \item[(3)] If $x_i=s_i$ for some $2\leq i\leq k-1$, then $x_1=\cdots =x_{i-1}=0$.
\end{enumerate}
\end{prop}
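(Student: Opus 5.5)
The plan is to prove that some optimal solution of the change-making problem satisfies (1)--(3), and that (1)--(3) single out exactly one representation of $M$. Together these say that $opt_B(M)$ equals $x_1+\dots+x_k$ for the unique (greedy) representation. The two halves are independent.

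\emph{Uniqueness.} Let $(x_1,\dots,x_k)$ satisfy (2) and (3). I would prove by induction on $i$ that $\sum_{j\le i} b_jx_j\le b_{i+1}-1$ for $1\le i\le k-1$. The case $i=1$ is $x_1\le s_1=b_2-1$. For the inductive step there are two cases.
\begin{itemize}
\item If $x_i=s_i$, then (3) forces all lower $x_j$ to vanish, so the sum is $s_ib_i=b_{i+1}-1$.
\item If $x_i\le s_i-1$, the induction hypothesis gives a sum of at most $(s_i-1)b_i+b_i-1=s_ib_i-1<b_{i+1}$.
\end{itemize}
Hence $\sum_{j<k}b_jx_j<b_k$, so $x_k=\lfloor M/b_k\rfloor$ is forced. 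Repeating the same bound one level down shows that each $x_i$ is the quotient of the remaining amount by $b_i$. This is a mixed-radix expansion and is unique. The same computation shows that the greedy algorithm produces a representation satisfying (1)--(3). Indeed, after taking $x_k=\lfloor M/b_k\rfloor$ the remainder is $<b_k=s_{k-1}b_{k-1}+1$, so $x_{k-1}\le s_{k-1}$, with equality only if the remainder is exactly $s_{k-1}b_{k-1}$, and then nothing is left. Iterating this gives (2) and (3).

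\emph{Optimality.} This is the main step, and the only place where $s_i\ge s_{i-1}$ is used. Start from any optimal representation and apply exchange moves that never increase $\sum x_i$ and strictly increase the vector $(x_k,x_{k-1},\dots,x_1)$ lexicographically. Since only finitely many representations of $M$ exist, the process terminates. There are two moves.
\begin{itemize}
\item \emph{Move (a).} If $x_i\ge s_i+1$ for some $i\le k-1$, use $(s_i+1)b_i=b_{i+1}+(b_i-1)=b_{i+1}+s_{i-1}b_{i-1}$ (for $i=1$ simply $(s_1+1)b_1=b_2$). Replace $s_i+1$ coins $b_i$ by one coin $b_{i+1}$ and $s_{i-1}$ coins $b_{i-1}$. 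The count changes by $s_{i-1}-s_i\le 0$.
\item \emph{Move (b).} If $x_i=s_i$ and $x_j\ge1$ for some $j<i$, use $s_ib_i+b_j=b_{i+1}+(b_j-1)=b_{i+1}+s_{j-1}b_{j-1}$ (nothing extra if $j=1$). The count changes by $s_{j-1}-s_i\le0$, by monotonicity of the $s$'s.
\end{itemize}
Both moves raise a higher coordinate, so the lexicographic potential increases. When neither move applies, (2) and (3) hold for $i\le k-1$. By the uniqueness paragraph, (1) then holds automatically. The result is an optimal representation satisfying (1)--(3), which is the unique one, so $opt_B(M)$ is its coin count.

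The main obstacle is checking that the exchange moves never increase the count. This rests on the identity $b_i-1=s_{i-1}b_{i-1}$ combined with the monotonicity hypothesis. If a subtlety arises in move (b) when $j$ is small, I would instead pick $j$ maximal among indices $<i$ with $x_j\ge1$, which keeps the bookkeeping local.
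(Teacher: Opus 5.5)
The paper does not prove this proposition. It quotes it from \cite[Lemma 2.6 and Proposition 2.7]{LiuSemigroupF}, so there is no internal argument to compare yours against. Your proposal is a correct and complete self-contained proof.

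\begin{itemize}
\item \emph{Uniqueness.} The induction $\sum_{j\le i}b_jx_j\le b_{i+1}-1$ shows that (2) and (3) alone already force (1). It also fixes each $x_i$ as a successive quotient. This half needs no monotonicity, as you say.
\item \emph{Existence.} The greedy algorithm visibly produces a vector with properties (1)--(3).
\item \emph{Optimality.} Both exchange moves preserve $M$ and strictly raise $x_{i+1}$. Neither increases $\sum x_i$: the changes are $s_{i-1}-s_i\le 0$ for move (a), and $s_{j-1}-s_i\le 0$ (or $-s_i$ when $j=1$) for move (b). The colexicographic potential on the finite set of representations therefore guarantees termination at an optimal vector satisfying (2)--(3).
\end{itemize}

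Your closing hedge about choosing $j$ maximal is unnecessary. The case $j=1$ is already handled, and $s_{j-1}\le s_i$ holds for every $j<i$.

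For comparison, the standard route in the change-making literature (cf.\ \cite{AnnAdamaszek}) is induction on $k$ via the one-point theorem of Magazine--Nemhauser--Trotter. If $(b_1,\dots,b_{k-1})$ is greedy-optimal, then adjoining $b_k$ preserves this iff the greedy algorithm uses at most $m=\lceil b_k/b_{k-1}\rceil=s_{k-1}+1$ coins on $mb_{k-1}=b_k+s_{k-2}b_{k-2}$. That is, iff $1+s_{k-2}\le s_{k-1}+1$.

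That route is shorter, but it rests on an external theorem and gives only optimality. The characterization by (1)--(3) must then be proved separately. Your move (a) is exactly the critical exchange at the amount $(s_i+1)b_i$ used in that test. Your argument delivers both the characterization and optimality from scratch.
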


If a solution $X=(x_1,x_2,\ldots,x_k)$ of the equation $\sum_i x_ib_i=M$ satisfies conditions (1), (2), and (3) above, then $X=X(M)$ is called the \emph{greedy presentation} of $M$. The motivation for this terminology is explained in \cite{LiuSemigroupF}.
Define $R(M)=\{X(m): 0\leq m \leq M\}$, and introduce a colexicographic order on $R(M)$ as follows:
\begin{align*}
(x_1^{\prime},x_2^{\prime},\ldots,x_k^{\prime})\preceq (x_1,x_2,\ldots,x_k)
\Longleftrightarrow & x_i^{\prime}=x_i\ \ \text{for all}\ \ i>0;\ \ \text{or}
\\& x_j^{\prime}<x_j\ \ \text{for a certain}\ \ j>0\ \ \text{and}\ \ x_i^{\prime}=x_i\ \ \text{for all}\ \ i>j.
\end{align*}
Clearly, the order relation $\preceq$ is a total order on $R(M)$.

We now consider the $O_B^H(M)$, where the sequences $B$ and $H$ satisfy conditions \eqref{BBcondition} and \eqref{HHcondition}.
A direct computation yields 
\begin{align*}
O_{B}^{H}(M)=\min\left\{\sum_{i=1}^k(ub_i+1)x_i \mid \sum_{i=1}^k b_ix_i=M, \ x_i\in\mathbb{N}, 1\leq i\leq k\right\}
=uM+opt_B(M).
\end{align*}

Let $\langle A\rangle$ be a \texttt{GCNS} numerical semigroup. When $p$ is a positive divisor of $a$, we characterize the Ap\'ery set of $\frac{a}{p}$ in the numerical semigroup $\frac{\langle A\rangle}{p}$.

\begin{thm}\label{HBuadNDr}
Let $\langle A\rangle$ be a \texttt{GCNS} numerical semigroup. Let $p$ be a positive divisor of $a$.
If $ua+d+k-2\geq \sum_{i=1}^{k-1}s_i$, then $N_{dr,p}(m)$ is increasing in $m\in \mathbb{N}$ for all $0\leq r\leq \frac{a}{p}-1$. Specifically, if $X(rp)=(x_1,x_2,\ldots,x_k)$ is the greedy presentation of $rp$, then the Ap\'ery set of $\frac{a}{p}$ in the numerical semigroup $\frac{\langle A\rangle}{p}$ consists of the elements
\begin{equation}\label{Ndr-pre}
N_{dr,p}=\sum_{i=1}^k \frac{a x_i }{p}+r(ua+d)=\left(\sum_{i=1}^k(ub_i+1)x_i\right)\frac{a}{p}+rd
\quad \text{for all}\quad 0\leq r\leq \frac{a}{p}-1.
\end{equation}
\end{thm}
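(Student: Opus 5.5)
We need to show $N_{dr,p}(m+1)\ge N_{dr,p}(m)$ for every $m\in\mathbb{N}$. Then the minimum in Lemma \ref{NDR-Quotient} is attained at $m=0$, and \eqref{Ndr-pre} follows at once from $O_B^H(M)=uM+opt_B(M)$ with $M=rp$. Indeed, $N_{dr,p}(0)=(u\,rp+opt_B(rp))\frac ap+rd=r(ua+d)+\frac ap\sum_i x_i$. Here Proposition \ref{greedproper} says that the greedy presentation $X(rp)$ realizes $opt_B(rp)$; the second form of \eqref{Ndr-pre} is the identity $\sum_i(ub_i+1)x_i=u\,rp+\sum_i x_i$.

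\textbf{Reduction to a change-making inequality.} Put $M=ma+rp$. Using $O_B^H(M)=uM+opt_B(M)$,
\begin{align*}
N_{dr,p}(m+1)-N_{dr,p}(m)=\frac ap\Bigl(ua+d+opt_B(M+a)-opt_B(M)\Bigr).
\end{align*}
So it suffices to prove
\begin{align*}
opt_B(M)-opt_B(M+a)\le ua+d \quad\text{for all } M\in\mathbb{N}.
\end{align*}
The hypothesis $ua+d+k-2\ge\sum_{i=1}^{k-1}s_i$ reduces this to the uniform bound
\begin{align*}
opt_B(M)-opt_B(M')\le \sum_{i=1}^{k-1}s_i-(k-2)\quad\text{for all } 0\le M\le M'.
\end{align*}
This bound does not involve $a$ at all.

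\textbf{The uniform bound.} By Proposition \ref{greedproper}, write $X(M)=(x_1,\dots,x_k)$ and $X(M')=(x_1',\dots,x_k')$, with $x_k=\lfloor M/b_k\rfloor\le x_k'$. Then
\begin{align*}
opt_B(M)-opt_B(M')\le \sum_{i=1}^{k-1}x_i-\sum_{i=1}^{k-1}x_i'\le \max\Bigl\{\sum_{i=1}^{k-1}x_i\Bigr\},
\end{align*}
where the maximum runs over greedy presentations. Now bound $\sum_{i<k}x_i$ using conditions (2) and (3). If no $x_i$ with $2\le i\le k-1$ equals $s_i$, then $\sum_{i<k}x_i\le s_1+\sum_{i=2}^{k-1}(s_i-1)=\sum_{i=1}^{k-1}s_i-(k-2)$. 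If $j$ is the largest index in $\{2,\dots,k-1\}$ with $x_j=s_j$, then $x_1=\dots=x_{j-1}=0$, and the sum is at most
\begin{align*}
s_j+\sum_{i=j+1}^{k-1}(s_i-1)\le \sum_{i=1}^{k-1}s_i-(k-2).
\end{align*}
This holds because the omitted terms contribute $s_1+\sum_{i=2}^{j-1}s_i\ge 1+(j-2)$, which covers the deficit $j-1$ in the count of $-1$'s. So in every case $\sum_{i<k}x_i\le\sum_{i=1}^{k-1}s_i-(k-2)$, which proves the uniform bound and hence monotonicity.

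\textbf{Main obstacle.} I expect the main care to be in the case analysis just given: making sure condition (3) really caps the partial sum by exactly $\sum s_i-(k-2)$, including the edge cases $k=2$ (where the bound is just $s_1$) and $j=k-1$. One must also check that $d<0$ causes no trouble. It does not: the argument only uses the hypothesis $ua+d+k-2\ge\sum_{i=1}^{k-1}s_i$, and $ua+d>0$ is implicit there since the right side is at least $k-1$.
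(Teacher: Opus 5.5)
Your proof is correct and takes the same route as the paper. You write the difference as $\frac{a}{p}\bigl(ua+d+opt_B(M+a)-opt_B(M)\bigr)$, drop the last coordinate using $\lfloor (M+a)/b_k\rfloor\ge\lfloor M/b_k\rfloor$, and bound $\sum_{i<k}x_i\le\sum_{i=1}^{k-1}s_i-(k-2)$ via Proposition \ref{greedproper}. The paper cites that proposition for this bound without argument; your case analysis on the largest index $j$ with $x_j=s_j$ supplies the missing justification.
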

\begin{proof}
Recall that if $X(ma+rp)=(y_1,y_2,\ldots,y_k)$, then
$$N_{dr,p}(m)=\left(u(ma+rp)+\sum_{i=1}^k y_i\right)\frac{a}{p}+\left(\frac{ma}{p}+r\right)d.$$
Let $X((m+1)a+rp)=(z_1,\ldots,z_k)$. Note that $z_{k}\geq y_k$, and
\begin{align*}
N_{dr,p}(m+1)-N_{dr,p}(m)&=\frac{ua^2+ad}{p}+\left(\sum_{i=1}^k z_i-\sum_{i=1}^k y_i\right)\frac{a}{p}
\\& \geq\left(ua+d+\sum_{i=1}^{k-1}z_i-\sum_{i=1}^{k-1}y_i\right)\frac{a}{p}
\\(\textrm{by\ Proposition\ \ref{greedproper}})\quad  & \geq\left(ua+d-\sum_{i=1}^{k-1}s_i+k-2\right)\frac{a}{p}\geq 0.
\end{align*}
Thus $N_{dr,p}(m)$ is increasing so that $N_{dr,p}=N_{dr,p}(0)$. This completes the proof.
\end{proof}

\section{Frobenius Number and Genus}\label{Section-3}

In Equation \eqref{Ndr-pre} with $X(rp)=(x_1,x_2,\ldots,x_k)$, we refer to $w(rp)=\sum_{i=1}^k(ub_i+1)x_i$ as the \emph{weight} of $rp$. The following result holds.

\begin{lem}{\em \cite[Lemma 2.11]{LiuSemigroupF}}\label{colex-incre}
Let $M\in \mathbb{N}$. Suppose the sequences $B$ and $H$ satisfy conditions \eqref{BBcondition} and \eqref{HHcondition}, respectively, with $s_i\leq u+1$ for all $1\leq i\leq k-1$. Let $X(r_1p)=(x_1^{\prime},\ldots,x_k^{\prime})\in R(M)$ and $X(r_2p)=(x_1,\ldots,x_k)\in R(M)$. If $(x_1^{\prime},\ldots,x_k^{\prime})\preceq (x_1,\ldots,x_k)$, then $w(r_1p)\leq w(r_2p)$. Moreover, if $r_1\neq r_2$ and $s_i<u+1$, then $w(r_1p)<w(r_2p)$.
\end{lem}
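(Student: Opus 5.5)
The plan is to compare the two weights directly by looking at the first index, from the top, where the presentations differ.

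\textbf{Reduction to the first difference.} Write $X=X(r_2p)=(x_1,\ldots,x_k)$ and $X'=X(r_1p)=(x'_1,\ldots,x'_k)$. If $X'=X$, the weights coincide and there is nothing to prove. Otherwise, by the colexicographic order there is a largest index $j$ with $x'_j<x_j$ and $x'_i=x_i$ for all $i>j$. Since the common top part contributes equally to both weights, it suffices to show
\[
\sum_{i=1}^{j}(ub_i+1)x'_i\le \sum_{i=1}^{j}(ub_i+1)x_i .
\]
Because $x_j-x'_j\ge 1$, the right-hand side minus $\sum_{i<j}(ub_i+1)x_i$ is at least $(ub_j+1)(x'_j+1)$. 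So it is enough to bound the lower part of $X'$:
\[
\sum_{i=1}^{j-1}(ub_i+1)x'_i\le ub_j+1 .
\]

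\textbf{Bounding the lower part.} Here we use Proposition \ref{greedproper}: the entries satisfy $x'_i\le s_i$, and if $x'_i=s_i$ then every entry below $i$ vanishes. A greedy presentation restricted to indices below $j$ represents an integer $L=\sum_{i<j}b_ix'_i$. I claim $L\le b_j-1=s_{j-1}b_{j-1}$, and I will prove it by induction on $j$:
\begin{itemize}
\item If $x'_{j-1}=s_{j-1}$, all lower entries are zero, so $L=s_{j-1}b_{j-1}$.
\item Otherwise $x'_{j-1}\le s_{j-1}-1$, and by induction $L\le (s_{j-1}-1)b_{j-1}+b_{j-1}-1<b_j$.
\end{itemize}
This gives $u\sum_{i<j}b_ix'_i\le u(b_j-1)$. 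It then remains to show that the number of coins satisfies $\sum_{i<j}x'_i\le u+1$.

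\textbf{Main obstacle: the coin count.} This bound on $\sum_{i<j}x'_i$ is where the hypothesis $s_i\le u+1$ enters. A naive bound gives $\sum_{i<j}x'_i\le \sum s_i$, which is too weak. Instead I will bound the total weight jointly rather than separately. Set
\[
W_j:=\max_{X'} \sum_{i<j}(ub_i+1)x'_i ,
\]
the maximum over greedy-admissible lower parts. I aim to prove by induction that $W_j\le ub_j+1$, with equality only in extremal configurations. There are two cases:
\begin{itemize}
\item If $x'_{j-1}=s_{j-1}$, the weight is $s_{j-1}(ub_{j-1}+1)=u(b_j-1)+s_{j-1}\le ub_j+1$, using $s_{j-1}\le u+1$.
\item If $x'_{j-1}\le s_{j-1}-1$, the weight is at most $(s_{j-1}-1)(ub_{j-1}+1)+W_{j-1}\le (s_{j-1}-1)(ub_{j-1}+1)+ub_{j-1}+1=s_{j-1}(ub_{j-1}+1)$, which is the same bound as in the first case.
\end{itemize}
The base case $W_1=0$ is trivial. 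This completes $w(r_1p)\le w(r_2p)$.

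\textbf{Strict inequality.} For the strict statement, note that $r_1\ne r_2$ forces $X'\ne X$, because greedy presentations of distinct integers differ. With all $s_i<u+1$, the first case above gives a strict inequality, and the second case then inherits strictness through the recursion. Hence the lower-part bound is strict, and so $w(r_1p)<w(r_2p)$.
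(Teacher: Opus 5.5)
Your proof is correct. The paper gives no argument for this lemma; it is quoted from Lemma~2.11 of the earlier GCNS paper, so there is no in-text proof to compare with. Your argument is a complete, self-contained proof that uses only properties (2) and (3) of Proposition~\ref{greedproper}:
\begin{itemize}
\item you take the top-most index $j$ where the presentations differ and reduce to $\sum_{i<j}(ub_i+1)x'_i\le ub_j+1$;
\item the recursion $W_j\le s_{j-1}(ub_{j-1}+1)=u(b_j-1)+s_{j-1}\le ub_j+1$ then finishes it.
\end{itemize}

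Two points to tidy up.

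First, delete the paragraph ``Bounding the lower part''. The target it sets, $\sum_{i<j}x'_i\le u+1$, is false in general. Take $B=(1,3,7)$ and $u=1$. The admissible lower part $(x'_1,x'_2)=(2,1)$ below $j=3$ has three coins, yet its weight $2\cdot 2+4=8$ equals $ub_3+1$. So bounding the weight jointly is not just convenient but necessary.

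Second, in the strict part nothing has to propagate through the recursion. Both of your cases give lower weight at most $s_{j-1}(ub_{j-1}+1)$. The single comparison $u(b_j-1)+s_{j-1}<ub_j+1$ holds exactly when $s_{j-1}<u+1$. For $j=1$ the lower part is empty, and strictness is automatic since the difference is at least $u+1$.

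A slightly different route is also available. Your estimate $\sum_{i<j}b_ix'_i\le b_j-1$ shows that the colexicographic order on greedy presentations is just the numerical order of the integers they represent. So it suffices to compare $X(m)$ with $X(m+1)$. Passing from $m$ to $m+1$ does one of two things:
\begin{itemize}
\item it raises $x_1$ by one, changing the weight by $u+1$; or
\item it performs a single carry $x_i=s_i\mapsto 0$, $x_{i+1}\mapsto x_{i+1}+1$, with all entries below $i$ zero, changing the weight by $(ub_{i+1}+1)-s_i(ub_i+1)=u+1-s_i$.
\end{itemize}
This gives the weak and strict statements at once and shows exactly where $s_i\le u+1$ (resp.\ $s_i<u+1$) is used. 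Your version avoids having to verify this step-by-step description of $X(m+1)$.
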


We now present the main result of this section.

\begin{thm}\label{Quotient-FrobeniusN}
Let $\langle A\rangle$ be a \texttt{GCNS} numerical semigroup. Let $p$ be a positive divisor of $a$ and $p\neq a$.
Suppose $X(a-p)=(x_1,x_2,\ldots,x_k)$ is the greedy presentation of $a-p$, and note that $x_k=\left\lfloor \frac{a-p}{b_k}\right\rfloor$. If $ua+d+k-2\geq \sum_{i=1}^{k-1}s_i$, $a+pd\geq 0$, and $s_i< u+1$ for all $1\leq i\leq k-1$, then 
\begin{align*}
F\left(\frac{\langle A\rangle}{p}\right)=\sum_{i=1}^kx_i\cdot \frac{a}{p}+\left(\frac{a}{p}-1\right)(ua+d)-\frac{a}{p}.
\end{align*}
In particular, if $d$ is positive, it suffices to assume $ua+d+k-2\geq \sum_{i=1}^{k-1}s_i$ and $s_i\leq u+1$ for $1\leq i\leq k-1$. The above formula remains valid.
\end{thm}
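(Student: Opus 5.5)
Apply Lemma \ref{LiuXin001} to the numerical semigroup $\frac{\langle A\rangle}{p}$ with respect to the element $\frac{a}{p}$. This gives
\[
F\left(\frac{\langle A\rangle}{p}\right)=\max_{0\le r\le a/p-1}N_{dr,p}-\frac{a}{p}.
\]
The hypothesis $ua+d+k-2\ge\sum_{i=1}^{k-1}s_i$ lets us invoke Theorem \ref{HBuadNDr}. In the weight notation, every Ap\'ery element has the form
\[
N_{dr,p}=w(rp)\cdot\frac{a}{p}+rd,\qquad w(rp)=u\,rp+\sum_i x_i(rp),
\]
where $X(rp)$ is the greedy presentation of $rp$. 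So the task reduces to showing that the maximum over $0\le r\le \frac{a}{p}-1$ is attained at $r_0=\frac{a}{p}-1$, i.e.\ at $rp=a-p$. Indeed, $N_{dr_0,p}=(u(a-p)+\sum x_i)\frac{a}{p}+(\frac{a}{p}-1)d$, which rearranges to $\sum x_i\cdot\frac ap+(\frac ap-1)(ua+d)$, the claimed formula before subtracting $\frac{a}{p}$.

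To compare $N_{dr,p}$ with $N_{dr_0,p}$, order the greedy presentations $X(rp)$, $0\le r\le r_0$, inside $R(a-p)$ by the colexicographic order. One first checks that greedy presentations are monotone in colex order with respect to $M$: if $M'<M$ then $X(M')\prec X(M)$. This follows from properties (1)–(3) of Proposition \ref{greedproper} by a standard comparison of the top differing coordinate, and is presumably implicit in \cite{LiuSemigroupF}. Given that, Lemma \ref{colex-incre} with $s_i<u+1$ gives strict monotonicity $w(rp)<w(r_0p)$ for $r<r_0$. Since the weights are integers, $w(rp)\le w(r_0p)-1$. Then
\[
N_{dr_0,p}-N_{dr,p}\ \ge\ \frac{a}{p}+(r_0-r)d .
\]
If $d>0$ this is positive outright. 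If $d<0$, we use $r_0-r\le r_0<\frac{a}{p}$ to get $(r_0-r)d\ge \frac{a}{p}d$, so the difference is at least $\frac{a}{p}+\frac{a}{p}d\cdot$. That bound is too weak, so I would instead use $r_0-r\le \frac ap -1$, giving a difference of at least $\frac{a}{p}+(\frac ap-1)d$. This is what the condition $a+pd\ge0$, i.e.\ $\frac{a}{p}+d\ge 0$, must control. The precise bookkeeping is the main obstacle. I would sharpen the weight gap, using that $w(rp)$ contains the term $u\,rp$, so that $w(r_0p)-w(rp)\ge u(r_0-r)p+(\text{change in }\sum x_i)$. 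Then $N_{dr_0,p}-N_{dr,p}\ge (r_0-r)\big(ua+d\big)+\frac{a}{p}\big(\sum x_i(r_0p)-\sum x_i(rp)\big)$. The coordinate-sum change can be bounded below via Proposition \ref{greedproper}(2), and this is where $a+pd\ge0$ together with $ua+d+k-2\ge\sum s_i$ should close the gap. I would try both decompositions and use whichever makes the $d<0$ case nonnegative.

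For the final clause, with $d>0$ only $s_i\le u+1$ is needed. Lemma \ref{colex-incre} then gives the non-strict inequality $w(rp)\le w(r_0p)$, and $N_{dr_0,p}-N_{dr,p}\ge (r_0-r)d\ge 0$ immediately. Also $a+pd\ge0$ is automatic when $d>0$, so the maximum is still attained at $r_0$ and the same formula follows.
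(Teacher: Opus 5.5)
There is a genuine gap, in the case $d<0$. Your reduction is sound: Lemma \ref{LiuXin001} and Theorem \ref{HBuadNDr} reduce the claim to showing that $N_{dr,p}=w(rp)\frac{a}{p}+rd$ is maximized at $r_0=\frac{a}{p}-1$. The colex monotonicity of greedy presentations is correct. Your treatment of $d>0$, including the weaker hypothesis $s_i\le u+1$, matches the paper.

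The case $d<0$ is exactly where $a+pd\ge 0$ is needed, and you leave it open. Comparing each $r$ directly with $r_0$, using only $w(r_0p)-w(rp)\ge 1$, gives $N_{dr_0,p}-N_{dr,p}\ge \frac{a}{p}+(r_0-r)d$. The gain $\frac{a}{p}$ is counted once, while the loss $(r_0-r)|d|$ grows with the distance. So this bound is negative, e.g.\ for $r=0$, $d\le -2$, $\frac{a}{p}\ge 3$.

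Your proposed repair does not close the gap either. Write $w(r_0p)-w(rp)=u(r_0-r)p+\Delta$ with $\Delta=opt_B(r_0p)-opt_B(rp)$. Bounding $\Delta$ only through Proposition \ref{greedproper} gives at best $\Delta\ge -\bigl(\sum_{i=1}^{k-1}s_i-k+2\bigr)$. Hence $N_{dr_0,p}-N_{dr,p}\ge (r_0-r)(ua+d)-\frac{a}{p}\bigl(\sum_{i=1}^{k-1}s_i-k+2\bigr)$. This is negative for $r_0-r=1$ in admissible cases, e.g.\ $p=1$, $k=3$, $s_1=s_2=u\ge 2$, $d<0$, where the bound equals $d-a(u-1)<0$. ``Try both decompositions'' is not an argument.

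The missing idea is to use the strict monotonicity of the weight one step at a time. In $R(a-p)$ you have $X(0)\prec X(p)\prec\cdots\prec X(r_0p)$, and Lemma \ref{colex-incre} (with $s_i<u+1$) makes $w$ strictly increasing along this chain. Since weights are integers, $w((r+1)p)-w(rp)\ge 1$ for every consecutive pair. Therefore
\[
N_{d(r+1),p}-N_{dr,p}=\bigl(w((r+1)p)-w(rp)\bigr)\frac{a}{p}+d\ \ge\ \frac{a}{p}+d\ \ge\ 0
\]
by $a+pd\ge 0$, and the sequence $N_{dr,p}$ is nondecreasing in $r$. This is precisely the paper's argument.

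Equivalently, $w(r_0p)-w(rp)\ge r_0-r$, which upgrades your direct comparison to $N_{dr_0,p}-N_{dr,p}\ge (r_0-r)\bigl(\frac{a}{p}+d\bigr)\ge 0$. With this one change your proof goes through.
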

\begin{proof}
By Theorem \ref{HBuadNDr}, Lemma \ref{colex-incre}, and Equation \eqref{Ndr-pre}, under the condition $s_i<u+1$, we have
$$N_{d(r+1),p}-N_{dr,p}=(w((r+1)p)-w(rp))\frac{a}{p}+d\geq \frac{a}{p}+d\geq0.$$
Therefore, we obtain
$$\max_{0\leq r\leq \frac{a}{p}-1} N_{dr,p}=N_{d(\frac{a}{p}-1),p}=\sum_{i=1}^kx_i\cdot \frac{a}{p}+\left(\frac{a}{p}-1\right)(ua+d).$$
Applying Lemma \ref{LiuXin001} yields the formula for $F\left( \frac{\langle A \rangle}{p} \right)$. When $d > 0$, the weaker condition $s_i \leq u + 1$ suffices by Lemma \ref{colex-incre}. This completes the proof.
\end{proof}

\begin{thm}\label{Quotient-Genusp}
Let $\langle A\rangle$ be a \texttt{GCNS} numerical semigroup. Let $p$ be a positive divisor of $a$ and $p\neq a$.
If $ua+d+k-2\geq \sum_{i=1}^{k-1}s_i$, then
\begin{align*}
g\left(\frac{\langle A\rangle}{p}\right)
=\sum_{r=1}^{\frac{a}{p}-1}\left(\sum_{i=1}^kx_i\right)_{rp}+\frac{(ua+d-1)}{2}\left(\frac{a}{p}-1\right),
\end{align*}
where $\left(\sum_{i=1}^{k}x_i\right)_{rp}$ denotes the sum of elements in the greedy presentation of $rp$. and $x_k=\left\lfloor \frac{rp}{b_k}\right\rfloor$.
\end{thm}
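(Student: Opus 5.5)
We combine Selmer's genus formula (Lemma \ref{LiuXin001}) with the explicit Apéry set from Theorem \ref{HBuadNDr}. Set $n:=\frac{a}{p}$, so the Apéry set of $n$ in $\frac{\langle A\rangle}{p}$ has exactly $n$ elements. Lemma \ref{LiuXin001} is stated for $\langle A\rangle$ and the element $a$, but its proof only uses that the Apéry set lists the minimal elements of each residue class modulo the chosen element. It therefore applies verbatim to any numerical semigroup $S$ and any $w\in S\setminus\{0\}$:
\[
g(S)=\frac1w\sum_{r=1}^{w-1}N_r-\frac{w-1}{2}.
\]
We apply this with $S=\frac{\langle A\rangle}{p}$ and $w=n$.

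First, the hypothesis $ua+d+k-2\geq\sum_{i=1}^{k-1}s_i$ is exactly the hypothesis of Theorem \ref{HBuadNDr}. Hence for each $0\leq r\leq n-1$ we have
\[
N_{dr,p}=\Bigl(\sum_{i=1}^k x_i\Bigr)_{rp}\, n+r(ua+d),
\]
where $(x_1,\dots,x_k)=X(rp)$ is the greedy presentation of $rp$, with $x_k=\lfloor rp/b_k\rfloor$ by Proposition \ref{greedproper}.

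Second, since $\gcd(a,d)=1$ gives $\gcd(n,d)=1$, the map $r\mapsto dr \bmod n$ is a bijection of $\{0,\dots,n-1\}$. So $\{N_{dr,p}\}_{r}$ is the full Apéry set. The term $r=0$ gives $X(0)=0$ and hence $N_{0,p}=0$, so
\[
\sum_{j=1}^{n-1}N_{j,p}=\sum_{r=1}^{n-1}N_{dr,p}.
\]
Note that no monotonicity in $r$ is needed here, unlike in Theorem \ref{Quotient-FrobeniusN}. That is why the conditions $s_i<u+1$ and $a+pd\geq0$ do not appear in this statement.

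Third, we substitute and compute:
\[
g=\frac1n\sum_{r=1}^{n-1}\Bigl[\Bigl(\sum_i x_i\Bigr)_{rp}n+r(ua+d)\Bigr]-\frac{n-1}{2}
=\sum_{r=1}^{n-1}\Bigl(\sum_i x_i\Bigr)_{rp}+\frac{(ua+d)(n-1)}{2}-\frac{n-1}{2},
\]
using $\sum_{r=1}^{n-1}r=\frac{n(n-1)}{2}$. The last two terms combine to $\frac{(ua+d-1)}{2}(n-1)$, which is the claimed formula.

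The only step needing care is the justification that Selmer's formula applies with $w=n$ in the quotient semigroup, rather than with $a$ in $\langle A\rangle$. I would address it by briefly recalling its standard proof. Each gap lies in a unique residue class $j$ modulo $n$ and is of the form $N_{j,p}-tn$ with $1\leq t\leq N_{j,p}/n$, so the number of gaps in class $j$ is $(N_{j,p}-j)/n$ for $0\le j<n$. Summing over $j$ gives the formula. The remainder of the argument is routine arithmetic.
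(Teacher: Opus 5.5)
Your proof is correct and follows the paper's route: substitute the Apéry set from Equation~\eqref{Ndr-pre} into Selmer's formula (Lemma~\ref{LiuXin001}), applied to $\frac{\langle A\rangle}{p}$ with respect to $\frac{a}{p}$, then sum the arithmetic progression $\sum_{r=1}^{a/p-1} r(ua+d)$. The paper's proof is a single line; your proposal also supplies two details it leaves implicit. These are that Selmer's formula holds for any numerical semigroup and any nonzero element, and that $r\mapsto dr \bmod \frac{a}{p}$ is a bijection because $\gcd(\frac{a}{p},d)=1$.
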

\begin{proof}
The result follows directly from Equation \eqref{Ndr-pre} and Lemma \ref{LiuXin001}.
\end{proof}

When $p=1$, Theorems \ref{Quotient-FrobeniusN} and \ref{Quotient-Genusp} reduce to \cite[Thoerems 2.12 and 2.13]{LiuSemigroupF}.
The following result can be verified using the \emph{numericalsgps} package in GAP \cite{M.Delgado}.

\begin{exa}
Let $B=(1,3,7,22)$ and $u=4$. Then $s_1=2,s_2=2,s_3=3$, and $H=(5,13,29,89)$. Therefore, we obtain
$$A=(a,5a+d,13a+3d,29a+7d,89a+22d).$$ 
For $a=50$ and $p=5$, the greedy presentation of $a-p$ is $(1,0,0,2)$.
Assuming $\gcd(a,d)=1$, we obtain
$$F\left(\frac{\langle A\rangle}{5}\right)=1820+9d.$$
For instance, when $d=1$, the Frobenius number $F\left(\frac{\langle A\rangle}{5}\right)$ is $1829$; when $d=3$, 
the Frobenius number $F\left(\frac{\langle A\rangle}{5}\right)$ is $1847$.
The genus can be computed similarly.
\end{exa}

\begin{exa}
Let $B=(1,4,17,69)$ and $u=3$. Then $s_1=3,s_2=4,s_3=4$, and $H=(4,13,52,208)$. Therefore, we obtain
$$A=(a,4a+d,13a+4d,52a+17d,208a+69d).$$ 
For $a=243$ and $p=9$, the greedy presentation of $a-p$ is $(2,2,1,3)$.
Assuming $\gcd(a,d)=1$, we obtain
$$F\left(\frac{\langle A\rangle}{9}\right)=19143+26d.$$
For instance, when $d=2$, the Frobenius number $F\left(\frac{\langle A\rangle}{9}\right)$ is $19195$.
\end{exa}

\section{Applications}\label{Section-4}

For a \texttt{GCNS} numerical semigroup, given the value of $s_i$ for all $1\leq i\leq k-1$, a closed-form formula for $F\left(\frac{\langle A\rangle}{p}\right)$ is obtained.

\begin{prop}
Let $A=(a, (u+1)a+d, ((s+1)u+1)a+(s+1)d)$, where $d\in \mathbb{Z}^{+}$, $\gcd(a,d)=1$, $a\geq 2$, and $u+1\geq s\geq 1$. 
Let $p$ be a positive divisor of $a$ and $p\neq a$. Then the Frobenius number is given by
\begin{align*}
&F\left(\frac{\langle A\rangle}{p}\right)=\left(a-p-s\left\lfloor \frac{a-p}{s+1}\right\rfloor\right)\frac{a}{p}+\left(\frac{a}{p}-1\right)(ua+d)-\frac{a}{p},
\end{align*}
and the genus by
\begin{align*}
&g\left(\frac{\langle A\rangle}{p}\right)=\left(\frac{a}{p}-1\right)\cdot \frac{ua+a+d-1}{2}-s\sum_{r=1}^{\frac{a}{p}-1} \left\lfloor \frac{rp}{s+1}\right\rfloor.
\end{align*}
\end{prop}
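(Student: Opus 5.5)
We specialize the general results of Section \ref{Section-3} to $k=2$. Take $B=(b_1,b_2)=(1,s+1)$, so $b_2=s_1b_1+1$ with $s_1=s$, and $H=(ub_1+1,ub_2+1)=(u+1,(s+1)u+1)$. Then $A=(a,h_1a+db_1,h_2a+db_2)$ is exactly the semigroup in the statement, and it is a \texttt{GCNS} numerical semigroup since $\gcd(a,d)=1$ and $d>0$.

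We then check the hypotheses of Theorem \ref{Quotient-FrobeniusN} in its ``$d$ positive'' form. These are $ua+d+k-2\geq \sum_{i=1}^{k-1}s_i$ and $s_i\le u+1$. The second is the assumption $s\le u+1$. For the first we need $ua+d\geq s$. If $u\geq 1$, then $ua+d\geq 2u+1\geq u+2>u+1\geq s$. In the boundary case where $u$ might be $0$ (if the reader allows it), $s\le 1$ and $d\ge1$ still give the inequality. Hence Theorem \ref{HBuadNDr}, Theorem \ref{Quotient-FrobeniusN} and Theorem \ref{Quotient-Genusp} all apply.

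Next we compute greedy presentations via Proposition \ref{greedproper}. For $k=2$, the greedy presentation of $M$ is $x_2=\lfloor M/(s+1)\rfloor$ and $x_1=M-(s+1)x_2\in\{0,\dots,s\}$. Condition (3) is vacuous, and (2) holds because $x_1$ is the remainder of $M$ modulo $s+1$. Hence
\[
x_1+x_2=M-s\left\lfloor \frac{M}{s+1}\right\rfloor .
\]
With $M=a-p$, Theorem \ref{Quotient-FrobeniusN} gives the Frobenius formula immediately.

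For the genus, substitute $M=rp$ into Theorem \ref{Quotient-Genusp}:
\[
g=\sum_{r=1}^{a/p-1}\left(rp-s\left\lfloor\frac{rp}{s+1}\right\rfloor\right)+\frac{ua+d-1}{2}\left(\frac ap-1\right).
\]
Summing the arithmetic series gives $\sum_{r=1}^{a/p-1} rp=\frac{p}{2}\cdot\frac ap\left(\frac ap-1\right)=\frac a2\left(\frac ap-1\right)$. Combining this term with $\frac{ua+d-1}{2}\left(\frac ap-1\right)$ yields $\left(\frac ap-1\right)\frac{ua+a+d-1}{2}$, which is the stated formula.

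The only step needing care is verifying the hypothesis $ua+d\ge s$ of Theorem \ref{HBuadNDr}, including the edge cases allowed by the parameter ranges. The rest is direct substitution.
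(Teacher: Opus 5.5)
Your proposal is correct and follows the paper's proof. You specialize Theorems \ref{Quotient-FrobeniusN} and \ref{Quotient-Genusp} to $k=2$ with $B=(1,s+1)$, use $x_1+x_2=M-s\lfloor M/(s+1)\rfloor$ for the greedy presentation, and substitute. You also explicitly check the hypothesis $ua+d\ge s$ and write out the arithmetic-series step for the genus, both of which the paper leaves implicit; the $u=0$ aside is unnecessary, since the \texttt{GCNS} setting already requires $u\in\mathbb{Z}^{+}$.
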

\begin{proof}
Let $X((\frac{a}{p}-1)p)=(x_1,x_2)$ be the greedy presentation of $(\frac{a}{p}-1)p$, i.e., $a-p=(s+1)x_2+x_1$. Then
$$x_1+x_2=\left\lfloor \frac{a-p}{s+1}\right\rfloor +a-p-(s+1)\left\lfloor \frac{a-p}{s+1}\right\rfloor=a-p-s \left\lfloor \frac{a-p}{s+1}\right\rfloor.$$
The formula for the Frobenius number follows from Theorem \ref{Quotient-FrobeniusN}, and the genus formula from Theorem \ref{Quotient-Genusp}.
\end{proof}

\begin{prop}
Let $A=(a, (u+1)a+d, (3u+1)a+3d, (10u+1)a+10d)$, where $d\in \mathbb{Z}^{+}$, $\gcd(a,d)=1$, $u\geq 2$, and $a\geq 2$. 
Let $p$ be a positive divisor of $a$ and $p\neq a$. Then the Frobenius number is given by
$$F\left(\frac{\langle A\rangle}{p}\right)=\frac{a}{p}\left(\left\lfloor \frac{a-p}{10}\right\rfloor +ua+d+\varphi_1((a-p) \mod 10)\right)-(ua+d),$$
where $(\varphi_1(i))_{0\leq i \leq 9} =(-1,0,1,0,1,2,1,2,3,2)$.
\end{prop}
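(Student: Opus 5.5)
The plan is to specialize Theorem~\ref{Quotient-FrobeniusN} to the sequence $B$ underlying this generating system and then compute $\sum_i x_i$ for the greedy presentation of $a-p$ explicitly.

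\textbf{Identifying the parameters.} Comparing $A=(a,(u+1)a+d,(3u+1)a+3d,(10u+1)a+10d)$ with the \texttt{GCNS} form $(a,h_1a+db_1,h_2a+db_2,h_3a+db_3)$ gives $k=3$ and $B=(1,3,10)$. Since $b_2=s_1b_1+1$ and $b_3=s_2b_2+1$, we get $s_1=2$ and $s_2=3$, so $s_1\le s_2$ as required. Then $H=(u+1,3u+1,10u+1)$ is exactly $h_i=ub_i+1$.

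\textbf{Checking the hypotheses.} Since $d>0$, the second part of Theorem~\ref{Quotient-FrobeniusN} applies, and we only need two conditions.
\begin{itemize}
\item[(i)] $ua+d+k-2\ge s_1+s_2$, that is, $ua+d+1\ge 5$. This holds because $u\ge 2$, $a\ge 2$ and $d\ge 1$ give $ua+d+1\ge 6$.
\item[(ii)] $s_i\le u+1$, that is, $3\le u+1$. This is exactly the assumption $u\ge 2$.
\end{itemize}
The theorem then gives
\[
F\left(\frac{\langle A\rangle}{p}\right)=\frac{a}{p}\Bigl(\sum_{i=1}^3x_i-1+ua+d\Bigr)-(ua+d),
\]
where $X(a-p)=(x_1,x_2,x_3)$. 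Comparing with the target formula, it suffices to show
\[
\sum_{i=1}^3 x_i=\Bigl\lfloor \tfrac{a-p}{10}\Bigr\rfloor+\varphi_1\bigl((a-p)\bmod 10\bigr)+1 .
\]

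\textbf{Computing the greedy presentation.} By Proposition~\ref{greedproper}(1), $x_3=\lfloor (a-p)/10\rfloor$. Writing $t=(a-p)\bmod 10\in\{0,\dots,9\}$, the pair $(x_1,x_2)$ is the unique solution of $x_1+3x_2=t$ satisfying $x_1\le 2$, $x_2\le 3$, and $x_1=0$ whenever $x_2=3$. This gives $x_2=\lfloor t/3\rfloor$ and $x_1=t-3x_2$. For $t=9$ this is $(0,3)$, which is consistent with condition~(3).

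Tabulating $x_1+x_2$ for $t=0,\dots,9$ gives
\[
(0,1,2,1,2,3,2,3,4,3).
\]
Subtracting $1$ from each entry yields $(-1,0,1,0,1,2,1,2,3,2)=(\varphi_1(t))_{0\le t\le 9}$, which proves the required identity.

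The only step needing care is the correct reading of $s_1,s_2$ from $B$ and the verification of $s_i\le u+1$. The hypothesis $u\ge 2$ is used precisely there; everything after that is a finite table check.
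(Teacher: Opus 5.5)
Your proposal is correct and takes the same route as the paper: specialize Theorem~\ref{Quotient-FrobeniusN} in its $d>0$ form with $B=(1,3,10)$, $s_1=2$, $s_2=3$, then read off the greedy presentation of $a-p$. You make explicit the ``routine computation'' the paper omits, namely the hypothesis checks $ua+d+1\ge 5$ and $3\le u+1$, and the table of $x_1+x_2$ for $t=(a-p)\bmod 10$, and all of it is accurate.
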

\begin{proof}
The result follows directly from Theorem \ref{Quotient-FrobeniusN} with $s_1=2,s_2=3$ by routine computation.
\end{proof}

\begin{prop}
Let $A=(a, (u+1)a+d, (4u+1)a+4d, (17u+1)a+17d)$, where $d\in \mathbb{Z}^{+}$, $\gcd(a,d)=1$, $u\geq 3$, and $a\geq 2$. 
Let $p$ be a positive divisor of $a$ and $p\neq a$. Then the Frobenius number is given by
$$F\left(\frac{\langle A\rangle}{p}\right)=\frac{a}{p}\left(\left\lfloor \frac{a-p}{17}\right\rfloor +ua+d+\varphi_2((a-p) \mod 17)\right)-(ua+d),$$
where $(\varphi_2(i))_{0\leq i \leq 16} =(-1,0,1,2,0,1,2,3,1,2,3,4,2,3,4,5,3)$.
\end{prop}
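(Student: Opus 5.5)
We will read off the data of the \texttt{GCNS} structure and then apply Theorem \ref{Quotient-FrobeniusN}. Here $k=3$ and $B=(1,4,17)$, so $s_1=3$ and $s_2=4$ (indeed $4=3\cdot1+1$ and $17=4\cdot 4+1$). Then $H=(u+1,4u+1,17u+1)$, which matches the generators $(u+1)a+d$, $(4u+1)a+4d$ and $(17u+1)a+17d$. Since $d>0$, the weaker form of Theorem \ref{Quotient-FrobeniusN} applies. It needs two hypotheses.

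\begin{itemize}
\item $s_i\le u+1$: here $4\le u+1$, which holds because $u\ge 3$.
\item $ua+d+k-2\ge s_1+s_2$: this reads $ua+d+1\ge 7$, which holds since $u\ge 3$ and $a\ge 2$ give $ua\ge 6$.
\end{itemize}

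The theorem then gives
\[
F\left(\frac{\langle A\rangle}{p}\right)=\frac{a}{p}\,\bigl(x_1+x_2+x_3\bigr)+\left(\frac{a}{p}-1\right)(ua+d)-\frac{a}{p},
\]
where $(x_1,x_2,x_3)$ is the greedy presentation of $a-p$. Rearranged, this is
\[
\frac{a}{p}\bigl(x_1+x_2+x_3+ua+d-1\bigr)-(ua+d).
\]
So it remains to show $x_1+x_2+x_3=\lfloor (a-p)/17\rfloor+1+\varphi_2((a-p)\bmod 17)$.

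By Proposition \ref{greedproper}, $x_3=\lfloor (a-p)/17\rfloor$. The pair $(x_1,x_2)$ is the greedy presentation of the residue $t=(a-p)\bmod 17$ with respect to $(1,4)$, subject to $x_1\in\{0,1,2,3\}$ and $x_2\in\{0,\dots,4\}$. Rule (3) only forces $x_1=0$ when $x_2=4$, and that happens only for $t=16$. For $0\le t\le 16$ we therefore have $x_2=\lfloor t/4\rfloor$ and $x_1=t-4\lfloor t/4\rfloor$. The one point to check is $t=16$: there the presentation is $(0,4)$, which is consistent with rule (3).

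Computing $x_1+x_2$ for $t=0,\dots,16$ gives
\[
0,1,2,3,\;1,2,3,4,\;2,3,4,5,\;3,4,5,6,\;4.
\]
Subtracting $1$ from each entry reproduces $\varphi_2=(-1,0,1,2,0,1,2,3,1,2,3,4,2,3,4,5,3)$. This completes the argument.

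The only step that needs care is the table itself. The residue $t=16$ is the one place where the greedy rule (3) could matter, and there it agrees with the floor/remainder formula.
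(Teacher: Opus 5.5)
Your proposal is correct and follows the paper's route: apply Theorem \ref{Quotient-FrobeniusN} in its $d>0$ form with $B=(1,4,17)$, $s_1=3$, $s_2=4$, then tabulate the greedy presentations of the residues $t=(a-p)\bmod 17$. You carry out explicitly the ``routine computation'' the paper leaves to the reader, including the hypothesis checks and the special case $t=16$, and your table of $x_1+x_2-1$ matches $\varphi_2$ entry by entry.
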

\begin{proof}
The result follows directly from Theorem \ref{Quotient-FrobeniusN} with $s_1=3,s_2=4$ by routine computation.
\end{proof}

\begin{prop}
Let $A=(a, (u+1)a+d, (3u+1)a+3d, (7u+1)a+7d, (29u+1)a+29d)$, where $d\in \mathbb{Z}^{+}$, $\gcd(a,d)=1$, $u\geq 3$, and $a\geq 2$. 
Let $p$ be a positive divisor of $a$ and $p\neq a$. Then the Frobenius number is given by
$$F\left(\frac{\langle A\rangle}{p}\right)=\frac{a}{p}\left(\left\lfloor \frac{a-p}{29}\right\rfloor +ua+d+\varphi_3((a-p) \mod 29)\right)-(ua+d),$$
where $(\varphi_3(i))_{0\leq i \leq 28} =(-1,0,1,0,1,2,1,0,1,2,1,2,3,2,1,2,3,2,3,4,3,2,3,4,3,4,5,4,3)$.
\end{prop}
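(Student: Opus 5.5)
This is a direct application of Theorem \ref{Quotient-FrobeniusN} with $k=4$ and $B=(1,3,7,29)$, $H=(u+1,3u+1,7u+1,29u+1)$. Reading off $b_2=s_1b_1+1=3$, $b_3=s_2b_2+1=7$ and $b_4=s_3b_3+1=29$ gives $s_1=2$, $s_2=2$, $s_3=4$. Since $d>0$, the weaker hypotheses of Theorem \ref{Quotient-FrobeniusN} apply.

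\textbf{Checking the hypotheses.} The condition $s_i\le u+1$ needs $4\le u+1$, and this holds because $u\geq 3$. The condition $ua+d+k-2\ge \sum s_i=8$ follows from $ua+d+2\ge 3\cdot 2+1+2=9$. Hence
\begin{align*}
F\left(\frac{\langle A\rangle}{p}\right)=\frac{a}{p}\sum_{i=1}^4x_i+\left(\frac{a}{p}-1\right)(ua+d)-\frac{a}{p},
\end{align*}
where $X(a-p)=(x_1,x_2,x_3,x_4)$ is the greedy presentation of $a-p$.

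\textbf{Reducing to the residue.} Write $a-p=29q+t$ with $0\le t\le 28$. By Proposition \ref{greedproper}, $x_4=q$, and $(x_1,x_2,x_3)$ is the greedy presentation of $t$ with respect to $(1,3,7)$. It satisfies $x_1,x_2\in\{0,1,2\}$ and $x_3\in\{0,\dots,4\}$, together with the rule that $x_3=4$ forces $x_1=x_2=0$, and $x_2=2$ forces $x_1=0$. Define $\psi(t)=x_1+x_2+x_3$. Regrouping then gives
\begin{align*}
F\left(\frac{\langle A\rangle}{p}\right)=\frac{a}{p}\left(q+ua+d+\psi(t)-1\right)-(ua+d),
\end{align*}
so the claim is equivalent to $\varphi_3(t)=\psi(t)-1$ for $0\le t\le 28$.

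\textbf{Tabulating $\psi$.} The greedy digits of $t$ are determined explicitly. First take $x_3=\lfloor t/7\rfloor$; then, within the remainder $t'\in[0,6]$, take $x_2=\lfloor t'/3\rfloor$ and $x_1=t'-3x_2$. On $0\le t'\le 6$ this gives $\psi$-contributions $0,1,2,1,2,3,2$, and each block of $7$ adds $1$. For $t=28$ the presentation is $(0,0,4)$, so $\psi=4$.

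Subtracting $1$ produces the sequence
$(-1,0,1,0,1,2,1,\;0,1,2,1,2,3,2,\;1,2,3,2,3,4,3,\;2,3,4,3,4,5,4,\;3)$,
which matches $\varphi_3$ exactly. The only nontrivial point is confirming that these presentations satisfy condition (3) of Proposition \ref{greedproper}, so that they are the unique greedy ones. This holds because $x_2=2$ occurs only at $t'=6$, where $x_1=0$, and $x_3=4$ occurs only at $t=28$. I expect this case verification to be the main (though mild) obstacle; everything else is substitution.
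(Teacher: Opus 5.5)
Your proposal is correct and follows the paper's proof, which is exactly the application of Theorem~\ref{Quotient-FrobeniusN} in its $d>0$ form with $s_1=s_2=2$, $s_3=4$ followed by ``routine computation''. You spell that computation out: you tabulate the greedy digit sums of $(a-p)\bmod 29$, check condition (3) of Proposition~\ref{greedproper}, and verify $\varphi_3(t)=\psi(t)-1$ entry by entry.
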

\begin{proof}
The result follows directly from Theorem \ref{Quotient-FrobeniusN} with $s_1=2,s_2=2,s_3=4$ by routine computation.
\end{proof}

For given values of $s_i$ for all $1\leq i\leq k-1$, similar formulas can be derived.
From Theorems \ref{Quotient-FrobeniusN} and \ref{Quotient-Genusp}, we directly obtain the following results.
\begin{cor}\label{Quotient-CNS-Frobenius}
Let $\langle A\rangle$ be a \texttt{CNS} numerical semigroup with $d\in\mathbb{Z}^{+}$. Let $p$ be a positive divisor of $a$ and $p\neq a$.
Suppose $X(a-p)=(x_1,x_2,\ldots,x_k)$ is the greedy presentation of $a-p$.
If $a\geq k-1-\frac{d-1}{b-1}$, then
\begin{align*}
F\left(\frac{\langle A\rangle}{p}\right)= \sum_{i=1}^kx_i \cdot \frac{a}{p} +\left(\frac{a}{p}-1\right)((b-1)a+d)-\frac{a}{p}.
\end{align*}
\end{cor}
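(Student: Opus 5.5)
The corollary should follow by specializing Theorem \ref{Quotient-FrobeniusN} (the clause for positive $d$) to the \texttt{CNS} parameters. In the \texttt{CNS} case we have $s_i=b$ for all $1\leq i\leq k-1$ and $u=b-1$. The plan is to check the hypotheses of that theorem in this situation and then substitute $u=b-1$ into the formula.

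First, the condition $s_i\leq u+1$ holds with equality, since $s_i=b=u+1$. This is exactly why I would use the positive-$d$ version of Theorem \ref{Quotient-FrobeniusN}, which needs only $s_i\leq u+1$ rather than the strict inequality. The hypothesis $d\in\mathbb{Z}^{+}$ in the corollary is what permits this. Note also that $a+pd\geq 0$ is automatic when $d>0$.

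Second, the remaining hypothesis is $ua+d+k-2\geq \sum_{i=1}^{k-1}s_i$. With the \texttt{CNS} values this reads
\begin{align*}
(b-1)a+d+k-2\geq (k-1)b .
\end{align*}
Rearranging gives $(b-1)a\geq (k-1)b-k+2-d=(k-1)(b-1)-(d-1)$. Since $b\geq 2$, we may divide by $b-1>0$ and obtain $a\geq k-1-\frac{d-1}{b-1}$. This is precisely the assumption of the corollary. All steps are reversible, so the corollary's assumption gives the theorem's hypothesis.

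Finally, substituting $u=b-1$ into the formula of Theorem \ref{Quotient-FrobeniusN} gives the stated expression. Here $X(a-p)$ is the greedy presentation relative to $B=(1,b+1,b^2+b+1,\ldots)$. No step is a real obstacle. The only point needing care is that the equality case $s_i=u+1$ is covered by the $d>0$ clause of the theorem. That clause in turn rests on the non-strict monotonicity in Lemma \ref{colex-incre}, which gives $N_{d(r+1),p}-N_{dr,p}\geq d>0$ without requiring a strict increase in the weight.
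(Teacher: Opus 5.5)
Your proposal is correct and is essentially the paper's proof: it specializes the $d>0$ clause of Theorem~\ref{Quotient-FrobeniusN} to $s_1=\cdots=s_{k-1}=b$ and $u=b-1$. Your rearrangement of $(b-1)a+d+k-2\geq (k-1)b$ into $a\geq k-1-\frac{d-1}{b-1}$, and your remark that the equality $s_i=u+1$ is what requires the $d>0$ clause, just spell out what the paper's one-line proof leaves implicit.
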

\begin{proof}
Since $d > 0$, the conclusion follows by applying Theorem \ref{Quotient-FrobeniusN} with $s_1=s_2=\cdots=s_{k-1}=b$ and $u=b-1$.
\end{proof}

\begin{cor}\label{Quotient-CNS-Genus}
Let $\langle A\rangle$ be a \texttt{CNS} numerical semigroup with $d\in\mathbb{Z}^{+}$. Let $p$ be a positive divisor of $a$ and $p\neq a$.
If $a\geq k-1-\frac{d-1}{b-1}$, then
\begin{align*}
g\left(\frac{\langle A\rangle}{p}\right)
=\sum_{r=1}^{\frac{a}{p}-1}\left(\sum_{i=1}^kx_i\right)_{rp}+\frac{((b-1)a+d-1)}{2}\left(\frac{a}{p}-1\right),
\end{align*}
where $\left(\sum_{i=1}^{k}x_i\right)_{rp}$ denotes the sum of elements in the greedy presentation of $rp$.
\end{cor}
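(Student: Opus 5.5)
The plan is to specialize Theorem \ref{Quotient-Genusp} to the \texttt{CNS} case, in the same way Corollary \ref{Quotient-CNS-Frobenius} specializes Theorem \ref{Quotient-FrobeniusN}. A \texttt{CNS} numerical semigroup is the \texttt{GCNS} numerical semigroup with $s_1=\cdots=s_{k-1}=b$ and $u=b-1$. So the only thing to verify is the single hypothesis of Theorem \ref{Quotient-Genusp}, namely $ua+d+k-2\geq \sum_{i=1}^{k-1}s_i$, under the assumption $a\geq k-1-\frac{d-1}{b-1}$.

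With these values the hypothesis reads
\begin{align*}
(b-1)a+d+k-2\geq (k-1)b .
\end{align*}
Since $b\geq 2$, we have $b-1>0$, so multiplying $a\geq k-1-\frac{d-1}{b-1}$ by $b-1$ gives
\begin{align*}
(b-1)a\geq (k-1)(b-1)-(d-1)=(k-1)b-k+1-d+1 .
\end{align*}
Rearranged, this is exactly $(b-1)a+d+k-2\geq (k-1)b$, so the hypothesis of Theorem \ref{Quotient-Genusp} holds.

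Unlike the Frobenius corollary, Theorem \ref{Quotient-Genusp} needs no condition of the form $s_i\le u+1$. Its proof only uses Equation \eqref{Ndr-pre}, which comes from Theorem \ref{HBuadNDr}, together with Lemma \ref{LiuXin001}. Positivity of $d$ is also not needed beyond what the \texttt{CNS} definition already imposes. Substituting $u=b-1$ into $\frac{ua+d-1}{2}\left(\frac{a}{p}-1\right)$ then gives the stated formula.

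The one point needing care is the direction of the inequality when multiplying by $b-1$, and this is fine because $b\geq 2$. No step is a real obstacle.
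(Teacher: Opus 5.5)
Your proposal is correct and matches the paper's proof, which simply applies Theorem \ref{Quotient-Genusp} with $s_1=\cdots=s_{k-1}=b$ and $u=b-1$. The one thing you add is an explicit check that $a\geq k-1-\frac{d-1}{b-1}$ is equivalent, after multiplying by $b-1>0$, to the hypothesis $(b-1)a+d+k-2\geq (k-1)b$; the paper leaves that check implicit.
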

\begin{proof}
The result follows by applying Theorem \ref{Quotient-Genusp} to the case where $s_1=s_2=\cdots =s_{k-1}=b$ and $u=b-1$.
\end{proof}

When $p=1$, Corollaries \ref{Quotient-CNS-Frobenius} and \ref{Quotient-CNS-Genus} specialize to \cite[Corollaries 2.16 and 2.17]{LiuSemigroupF}.








\noindent
{\small \textbf{Acknowledgements:}}
The author thanks his advisor Guoce Xin for guidance and support.

\end{document}